\documentclass{amsart}
\usepackage{amsmath}
\usepackage{amssymb}
\usepackage{color}
\usepackage{mathdots}

\usepackage{graphicx}
\usepackage{epsfig}
\usepackage{multirow}
\usepackage{colortbl}

\definecolor{lightgray}{gray}{0.9}

\DeclareMathOperator{\Id}{Id}

\def\FF{\mathbb{F}}
\def\NN{\mathbb{N}}

\newtheorem{theorem}{Theorem}[section]

\newtheorem{proposition}[theorem]{Proposition}
\newtheorem{corollary}[theorem]{Corollary}
\theoremstyle{definition}

\theoremstyle{remark}
\newtheorem{remark}[theorem]{Remark}
\numberwithin{equation}{section}

\def\separa{\hbox to 14 truecm{\hrulefill}}

\author[P. Danchev]{Peter Danchev}
\address{Institute of Mathematics and Informatics, Bulgarian Academy of Sciences, 1113 Sofia, Bulgaria}
\email{danchev@math.bas.bg}
\thanks{The first author was partially supported  by the BIDEB 2221 of T\"UB\'ITAK}

\author[E. Garc\'\i a]{Esther Garc\'\i a}
\address{Departamento de Matem\'{a}tica  Aplicada, Ciencia e Ingenier\'{\i}a de  Materiales y Tecnolog\'{\i}a Electr\'onica,
Universidad Rey Juan Carlos, 28933 M\'{o}s\-to\-les (Madrid), Spain}
\thanks{The second author and third authors were partially supported by Ayuda Puente 2023, URJC, and MTM2017-84194-P (AEI/FEDER, UE)}
\email{esther.garcia@urjc.es}

\author[M. G\'omez Lozano]{Miguel G\'omez Lozano}
\address{Departamento de \'Algebra, Geometr\'{\i}a y
Topolog\'{\i}a, Universidad de M\'alaga, 29071 M\'alaga, Spain}
\thanks{The three authors were partially supported by the Junta de Andaluc\'{\i}a FQM264}
\email{miggl@uma.es}

\begin{document}

\title[Decomposition as sum of diagonalizable and square-zero matrices]{Matrices over finite fields of odd characteristic as sums of diagonalizable and square-zero matrices}
\maketitle

\begin{abstract} Let $\mathbb{F}$ be a finite field of odd characteristic. When $|\mathbb{F}|\ge 5$, we prove that every matrix $A$ admits a decomposition into $D+M$ where $D$ is diagonalizable and $M^2=0$. For  $\mathbb{F}=\mathbb{F}_3$, we show that such decomposition is possible for non-derogatory matrices of order at least 5, and more generally, for matrices whose first invariant factor is not a non-zero trace irreducible polynomial of degree 3; we also establish that matrices consisting of direct sums of companion matrices, all of them associated to the same irreducible polynomial of non-zero trace and degree 3 over $\mathbb{F}_3$, never admit such decomposition.

These results completely settle the question posed by Breaz in Lin. Algebra \& Appl. (2018) asking if it is true that for big enough positive integers $n\ge 3$ all matrices $A$ over a field of odd cardinality $q$ admit decompositions of the form $E+M$ with $E^q=D$ and $M^2=0$: the answer is {\it yes} for $q\ge 5$, but there are counterexamples for $q=3$ and each order $n=3k$, $k\ge 1$.
\end{abstract}


\bigskip

{\footnotesize \textit{Key words}: characteristic polynomial, diagonalizable matrix, square-zero matrix, invariant factors.}

{\footnotesize \textit{2020 Mathematics Subject Classification}: 15A15, 15A21, 15A83.}


\section{Backgrounds}

All matrices considered in the present paper are square -- they have the same number of columns and rows; such number is called the order of the matrix. Now, for completeness of the exposition and the reader's convenience, we recollect some fundamental material. A matrix is said to be {\it non-derogative} (or, in other words, {\it non-derogatory}) if its minimal polynomial coincides  with the characteristic polynomial of the matrix; in such case, the matrix is similar to the companion matrix of its characteristic polynomial. A matrix $X$ is said to be {\it diagonalizable} (or, in other terms, {\it non-defective}) if it is similar to a diagonal matrix, i.e., there exits an invertible matrix $U$ such that the matrix $U^{-1}XU$ is diagonal. Recall also that a {\it nilpotent} matrix $Y$ is the one for which $Y^k = 0$ for some integer $k>0$. In the case where $k=2$, such a matrix is just called {\it square-zero} for short. Likewise, a matrix $Z$ is known to be a {\it $t$-potent} for some integer $t\geq 2$ provided $Z^t=Z$. The case when $t=2$, such a matrix is just termed {\it idempotent}.

In \cite{Br}, Breaz studies those matrices over finite fields of odd characteristic that are a sum of a potent matrix and a nilpotent matrix, proving that each matrix over a field of odd cardinality $q$ is a sum of a $q$-potent matrix and a nilpotent matrix of order less than or equal to $3$ (see \cite{BM} too, as well as \cite{AM}, \cite{A}, \cite{DGL3} and \cite{DGL4} for a more detailed information). He also asks in \cite[Remark 7]{Br} whether for matrices of an enough big order over a finite field of odd cardinality $q$ there will always exist a decomposition into a $q$-potent matrix plus a square-zero matrix.
Notice that all diagonalizable matrices over a finite field of cardinality $q$ are $q$-potent and, conversely, if $D^q=D\in \mathbb{M}_n(\mathbb{F}_q)$, then $D$ is diagonalizable since its minimal polynomial divides the separable polynomial $x^q-x$.

In \cite{DGL1}, we initiated the study of those matrices which are a sum of a diagonalizable matrix and a square-zero matrix. The main two results of \cite{DGL1}, which we will extend in this article, are the following:

\medskip

\noindent{\bf Lemma} \cite[Lemma 3.1]{DGL1} {\it
Let $\mathbb{F}$ be a field, let $n\ge 3$  and let $A\in \mathbb{M}_n(\mathbb{F})$ be a non-derogative matrix with trace $c$. Then:

\medskip

\noindent$\bullet$ If $c=0$ and $|\mathbb{F}|\ge n$, $A$ admits a decomposition into $D+M$ where $D$ is diagonalizable and $M^2=0$.

\medskip

\noindent$\bullet$ If $c\ne 0$ and $|\mathbb{F}|\ge n+1$, $A$ admits a decomposition into $D+M$, where $D$ is diagonalizable and $M^2=0$.}

\medskip

\noindent{\bf Theorem} \cite[Theorem 3.6]{DGL1} {\it
Given any field $\mathbb{F}$, all matrices in $\mathbb{M}_2(\mathbb{F})$ admit a decomposition into $D+M$, where $D$ is a diagonalizable matrix and $M$ is a matrix such that $M^2=0$.

Let $n\ge 3$ and let $\mathbb{F}$ be a field with $|\mathbb{F}|\ge n+1$. Then every matrix $A\in \mathbb{M}_n(\mathbb{F})$ admits a decomposition into $D+M$, where $D$ is a diagonalizable matrix and $M$ is a matrix such that $M^2=0$. In particular, square matrices over infinite fields always admit such decomposition.}

\medskip

In this paper, we succeed to establish that any non-derogatory matrix of size at least 5 over a field of odd characteristic can be written as the sum of a diagonalizable matrix and a square-zero matrix (see, for more account, Proposition~\ref{cnonderog} and Proposition~\ref{cnonderog3}).

With this result in hand, together with the conclusions of \cite{DGL1} cited above, we can show that for fields $\mathbb{F}$ of odd characteristic and cardinality at least 5, every matrix $A\in\mathbb{M}_n(\mathbb{F})$ can be expressed as $A=D+M$, where $D$ is a diagonalizable matrix and $M$ is a square-zero matrix; in particular, $D^q=D$ and so $A$ can be expressed as the sum of $q$-potent matrix and a square-zero matrix. Moreover, if $\mathbb{F}=\mathbb{F}_3$ and the smallest invariant factor of a matrix  $A\in\mathbb{M}_n(\mathbb{F})$ is not a non-zero trace irreducible polynomial of degree 3, then there exists a square-zero matrix $M\in \mathbb{M}_n(\mathbb{F})$ and a diagonalizable matrix $D$ such that $A=D+M$.

We also show that, for each $n=3k$, where $k\ge 1$, there exist matrices of order $n$ over the field $\mathbb{F}_3$ that cannot be expressed as $D+M$ with $D^3=D$ and $M^2=0$.

We end our work with a plan for a further research treatment of this type of decomposition of matrices over fields of characteristic 2, by raising two parallel questions.

\section{Principal Results}

We begin with a technical result.

\begin{proposition}\label{cnonderog}
Let $\mathbb{F}$ be a field of odd characteristic, and let $A\in\mathbb{M}_n(\mathbb{F})$ be a non-derogative matrix of order $n\ge 5$. The following statements hold:

\begin{enumerate}
\item If $\text{Trace}(A)=u_{n-1}\ne 0$, then there exists a square-zero matrix $M$ such that $A+M$ is diagonalizable with $3$ different eigenvalues $\{0, u_{n-1}, -u_{n-1}\}$.
   \item If $\text{Trace}(A)=0$ and $n$ is odd, then there exists a square-zero matrix $M$ such that $A+M$ is diagonalizable with $3$ different eigenvalues $\{0, 1, -1\}$.
   \item If $\text{Trace}(A)=0$, $n$ is even and $\FF\ne \mathbb{F}_3$, then there exists a square-zero matrix $M$ such that $A+M$ is diagonalizable with at most $4$ different eigenvalues.
\end{enumerate}

In particular, every non-derogatory matrix of order at least 5 over a field of odd cardinality $q\ge 5$ can be written as the sum of a diagonalizable matrix and a square-zero matrix.
\end{proposition}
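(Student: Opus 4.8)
The plan is to obtain the final (``In particular'') assertion as a formal consequence of parts (1)--(3): if $A\in\mathbb{M}_n(\mathbb{F})$ is non-derogative of order $n\ge 5$ over a field of odd cardinality $q\ge 5$, then either $\mathrm{Trace}(A)\ne 0$ and part (1) applies, or $\mathrm{Trace}(A)=0$ with $n$ odd and part (2) applies, or $\mathrm{Trace}(A)=0$ with $n$ even, in which case part (3) applies because $q\ge 5$ forces $\mathbb{F}\ne\mathbb{F}_3$. In every case $A=D+M$ with $D$ diagonalizable and $M^2=0$; since the minimal polynomial of $D$ then divides the separable polynomial $x^q-x$, we also get $D^q=D$, so $A$ is simultaneously a sum of a $q$-potent matrix and a square-zero matrix. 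Thus the whole content lies in (1)--(3), and I describe how I would prove those.

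First I would reduce to a companion matrix. The property ``$X=D+M$ with $D$ diagonalizable, $M^2=0$'', together with the stated information on the spectrum of $D$, is invariant under conjugation, and a non-derogative $A$ is similar to $C(p)$, the companion matrix of its characteristic polynomial $p$ (of degree $n$, with $\mathrm{Trace}(A)=u_{n-1}$ the relevant coefficient of $p$). So assume $A=C(p)$. Next, fix the target spectrum $S$ ($\{0,u_{n-1},-u_{n-1}\}$ in (1), $\{0,1,-1\}$ in (2), a set of size $\le 4$ in (3)) and an assignment of multiplicities $m_\lambda\ge 1$, $\lambda\in S$, with $\sum_{\lambda}m_\lambda=n$ and $\sum_{\lambda}m_\lambda\lambda=u_{n-1}$; this last equation is forced because $M$ is nilpotent, hence traceless, so $\mathrm{Trace}(D)=\mathrm{Trace}(A)$. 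A short check shows such multiplicities exist once $n\ge 5$; in (3) the fourth eigenvalue is present to absorb a parity constraint coming from the construction (not from the trace bookkeeping), which is why $|\mathbb{F}|\ge 4$ -- in particular $\mathbb{F}\ne\mathbb{F}_3$ -- is required there.

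The core step is the construction. Let $D_0$ be the diagonal matrix carrying the chosen eigenvalues with the chosen multiplicities. I would look for a square-zero matrix $N$ such that $B:=D_0+N$ is non-derogative with characteristic polynomial $p$. Granting this, $B$ is similar to $C(p)=A$, say $QBQ^{-1}=A$, and then $A=QD_0Q^{-1}+QNQ^{-1}=D+M$ with $D=QD_0Q^{-1}$ diagonalizable (same spectrum as $D_0$) and $M=QNQ^{-1}$ square-zero, which is exactly the assertion, eigenvalue count included. To produce $N$, I would take $B$ of a sparse ``companion-like'' shape: diagonal equal to $D_0$, a partial sub-diagonal of $1$'s chosen so that its own square vanishes (which forces an alternating pattern whose feasibility depends on the parity of $n$), plus a few correction entries concentrated in the last column -- and, when $n$ is even, in one further column -- whose only job is to steer the characteristic polynomial onto $p$; all of this arranged so that $\im N\subseteq\Ker N$. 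Non-derogatoriness of $B$ should then be automatic from a cyclic vector built into the shape, leaving the characteristic polynomial as the real point.

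The main obstacle is precisely matching the characteristic polynomial. Over a finite field a dimension count does not yield surjectivity, so one needs an explicit choice of the correction entries realizing every admissible $p$ while keeping $N$ square-zero and $B$ non-derogative -- and this is where the three cases genuinely separate: for odd $n$ the partial sub-diagonal together with a single column of corrections leaves enough freedom to do this with a $3$-element spectrum, whereas for even $n$ one correction slot is blocked by the square-zero condition and has to be traded for a genuinely new eigenvalue, which needs $|\mathbb{F}|\ge 4$ and hence excludes $\mathbb{F}_3$. If the direct construction turns out to be awkward, the fallback I would use is a recursion: write $C(p)$ up to similarity as $\left(\begin{smallmatrix} C(g)&E\\ 0&C(h)\end{smallmatrix}\right)$ with $p=gh$ and $\deg g,\deg h$ chosen to keep both blocks tractable, treat the two companion blocks separately, and absorb the coupling $E$ into the square-zero summand -- at the cost of checking that $E$ and the two block-perturbations fit together so that the assembled $M$ still squares to zero.
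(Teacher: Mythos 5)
Your reduction to a companion matrix, the trace bookkeeping ($\mathrm{Trace}(D)=\mathrm{Trace}(A)$ since $M$ is nilpotent), and the derivation of the final clause from parts (1)--(3) are all fine, and the overall strategy you sketch --- fix a diagonal $D_0$ with a small prescribed spectrum and look for a square-zero $N$ such that $D_0+N$ is non-derogative with characteristic polynomial $p$, then conjugate --- is a legitimate alternative to what the paper does (the paper goes in the opposite direction: it adds an explicitly written square-zero $M$ to the companion matrix $C(p)$ and exhibits a new basis in which $A+M$ splits into $2\times 2$ and $3\times 3$ blocks annihilated by $x(x-u_{n-1})(x+u_{n-1})$, resp.\ by $(x-1)(x+1)x$ or $(x^2-1)(x^2-\alpha^2)$). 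However, as written your argument has a genuine gap: the core step is never carried out. Everything in the proposition is equivalent to the existence claim you defer --- that for \emph{every} admissible monic $p$ of degree $n\ge 5$ there is a square-zero $N$, supported on a ``companion-like'' sparse pattern, with $D_0+N$ non-derogative and of characteristic polynomial exactly $p$ --- and you yourself identify this as ``the main obstacle'' without resolving it. You give no explicit choice of the correction entries, no verification that the alternating sub-diagonal plus corrections indeed satisfies $N^2=0$ while leaving enough free parameters to hit all $n$ coefficients of $p$ (note the corrections must also avoid disturbing the square-zero condition, which couples the available slots to the parity of $n$), and no proof of the similarity step: non-derogatoriness of $B=D_0+N$ is asserted to be ``automatic from a cyclic vector built into the shape,'' but it is genuinely needed, since equality of characteristic polynomials alone does not imply $B\sim C(p)$ when $D_0$ has repeated eigenvalues. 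The fallback recursion (splitting $p=gh$ and absorbing the coupling block into the nilpotent summand) is likewise only named, and the compatibility check it requires --- that the assembled perturbation still squares to zero --- is exactly the kind of verification that constitutes the proof.

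For comparison, the paper's proof is entirely constructive at precisely this point: in each of the cases (trace $\ne 0$ with $n$ odd/even, trace $0$ with $n$ odd, trace $0$ with $n$ even and $|\mathbb{F}|\ge 5$) it writes down $M$ explicitly in terms of the coefficients $u_0,\dots,u_{n-1}$ of $p$, checks $M^2=0$ directly, and then exhibits an explicit basis $B'$ in which $A+M$ is a direct sum of small blocks, so that $A+M$ satisfies a separable cubic or quartic and is therefore diagonalizable with the stated eigenvalues; the hypothesis $\mathbb{F}\ne\mathbb{F}_3$ in case (3) enters only through the choice of $\alpha\ne 0$ with $\alpha^2\ne 1$. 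Until you supply analogous explicit data (the entries of $N$, the verification of $N^2=0$, the computation showing the characteristic polynomial is $p$, and the cyclic-vector argument for non-derogatoriness), your text is a plan rather than a proof of parts (1)--(3).
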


\begin{proof}
It is clear that if a matrix $A$ can be decomposed as the sum of a nilpotent matrix and a diagonalizable matrix, then any matrix $B$ similar to $A$ also decomposes in the same way. Therefore, we can suppose that $A$ has the following form

$$A=\left(
 \begin{array}{ccccccccc}
 0 & 0 & 0 & 0 & 0 & \cdots & 0 & 0 & u_0 \\
 1 & 0 & 0 & 0 & 0 & \cdots & 0 & 0 & u_1 \\
 0 & 1 & 0 & 0 & 0 & \cdots & 0 & 0 & u_2 \\
 0 & 0 & 1 & 0 & 0 & \cdots & 0 & 0 & u_3 \\
 0 & 0 & 0 & 1 & 0 & \cdots & 0 & 0 & u_4 \\
 0 & 0 & 0 & 0 & 1 & \vdots & 0 & 0 & u_5 \\
 \vdots & \vdots & \vdots & \vdots & \vdots & \ddots & \vdots & \vdots & \vdots \\
 0 & 0 & 0 & 0 & 0 & \cdots & 1 & 0 & u_{n-2} \\
 0 & 0 & 0 & 0 & 0 & \cdots & 0 & 1 & u_{n-1} \\
 \end{array}
 \right)$$
Along this proof, for smoothness of the presentation, all the matrices will be identified with endomorphisms of the vector space $\mathbb{F}^n$ with respect to the canonical basis $\{e_1,\dots, e_n\}$.

\medskip

(1.a) Let us study the case when $\text{Trace}(A)=u_{n-1}\ne 0$ and $n$ is odd: let us consider $M=(m_{ij})$, where $m_{k+1,k}=-1$ and $m_{k-1,k}=u_{n-1}^2$ for $k=3, 5, \dots,n-2$, $m_{1,n}=-u_0$, $m_{k,n}=-u_{k-1}-u_{n-1}u_{k}$ for $k=2,4,\dots, n-3$, $m_{n-1,n}=-u_{n-2}$ and the rest equal to $0$, i.e.,

$$M=\left(
       \begin{array}{ccccccccc}
         0 & 0 & 0 & 0 & 0 & \cdots & 0 & 0 & -u_0 \\
         0 & 0 & u_{n-1}^2 & 0 & 0 & \cdots & 0 & 0 & -u_1-u_{n-1} u_2 \\
         0 & 0 & 0 & 0 & 0 & \cdots & 0 & 0 & 0 \\
         0 & 0 & -1 & 0 & u_{n-1}^2 & \cdots & 0 & 0 & -u_3-u_{n-1}u_4 \\
         0 & 0 & 0 & 0 & 0 & \cdots & 0 & 0 & 0 \\
         0 & 0 & 0 & 0 & -1 & \cdots & 0 & 0 & -u_5-u_{n-1}u_6 \\
         \vdots & \vdots & \vdots & \vdots & \vdots & \ddots & \vdots & \vdots & \vdots \\
         0 & 0 & 0 & 0 & 0 & \cdots & -1 & 0 & -u_{n-2} \\
         0 & 0 & 0 & 0 & 0 & \cdots & 0 & 0 & 0 \\
       \end{array}
     \right).$$

Thus, we calculate that $M^2=0$ and
$$A+M=\left(
       \begin{array}{ccccccccc}
         0 & 0 & 0 & 0 & 0 & \cdots & 0 & 0 & 0 \\
         1 & 0 & u_{n-1}^2 & 0 & 0 & \cdots & 0 & 0 & -u_{n-1}u_2 \\
         0 & 1 & 0 & 0 & 0 & \cdots & 0 & 0 & u_2 \\
         0 & 0 & 0 & 0 & u_{n-1}^2 & \cdots & 0 & 0 & -u_{n-1}u_4 \\
         0 & 0 & 0 & 1 & 0 & \cdots & 0 & 0 & u_4 \\
         \vdots & \vdots & \vdots & \vdots & \vdots & \ddots & \vdots & \vdots & \vdots \\
         0 & 0 & 0 & 0 & 0 & \cdots & 0 & 0 & -u_{n-1}u_{n-3} \\
         0 & 0 & 0 & 0 & 0 & \cdots & 0 & 0 & u_{n-3} \\
         0 & 0 & 0 & 0 & 0 & \cdots & 0 & 0 & 0 \\
         0 & 0 & 0 & 0 & 0 & \cdots & 0 & 1 & u_{n-1} \\
       \end{array}
     \right).$$
With respect to the new basis $$B'=\{e_1, e_2,\dots,\, e_{n-2},u_{n-1}e_{n-1}+\sum_{i=1}^{(n-3)/2} u_{2i}e_{2i},\, u_{n-1}e_n+\sum_{i=1}^{(n-3)/2} u_{2i}e_{2i+1}\},$$ we have that
$$(A+M)_{B'}=\left(
       \begin{array}{ccc|cc|cc|cc}
         0 & 0 & 0 & 0 & 0 & \cdots & 0 & 0 & 0 \\
         1 & 0 & u_{n-1}^2 & 0 & 0 & \cdots & 0 & 0 & 0 \\
         0 & 1 & 0 & 0 & 0 & \cdots & 0 & 0 & 0 \\
         \hline
         0 & 0 & 0 & 0 & u_{n-1}^2 & \cdots & 0 & 0 & 0 \\
         0 & 0 & 0 & 1 & 0 & \cdots & 0 & 0 & 0 \\
         \hline
         0 & 0 & 0 & 0 & 0 & \cdots &  & 0 & 0 \\
         \vdots & \vdots & \vdots & \vdots & \vdots & \ddots & \vdots & \vdots & \vdots \\
         \hline
         0 & 0 & 0 & 0 & 0 & \cdots & 0 & 0 & 0 \\
         0 & 0 & 0 & 0 & 0 & \cdots & 0 & 1 & u_{n-1} \\
       \end{array}
     \right),$$
which consists of the direct sum of a block of the form $\left(
                                               \begin{array}{ccc}
                                                 0 & 0 & 0 \\
                                                 1 & 0 & u_{n-1}^2 \\
                                                 0 & 1 & 0 \\
                                               \end{array}
                                             \right)$
that satisfies the polynomial $x^3-u_{n-1}^2x$, a block of the form $\left(
\begin{array}{ccc}
0 & 0 \\
1 & u_{n-1}\\
\end{array}
\right)$ that satisfies the polynomial $x^2-u_{n-1}x$
and, if $n\ge 7$, blocks of the form $\left(
                                               \begin{array}{ccc}
                                                 0 & u_{n-1}^2 \\
                                                 1 & 0\\
                                               \end{array}
                                             \right)$ that satisfy the polynomial $x^2-u_{n-1}^2$.

Therefore, the matrix $A+M$ satisfies the equation $(x-u_{n-1})(x+u_{n-1})x=0$, and so it is diagonalizable with eigenvalues $\{0,u_{n-1},-u_{n-1}\}$.

\medskip

(1.b) Let us study the case when $\text{Trace}(A)=u_{n-1}\ne 0$ and $n$ is even: let us consider $M=(m_{ij})$, where $m_{k+1,k}=-1$ and $m_{k-1,k}=u_{n-1}^2$ for $k=2, 4, \dots,n-2$,  $m_{k,n}=-u_{k-1}-u_{n-1} u_k$ for $k=1, 3, \dots,n-3$, $m_{n-1,n}=-u_{n-2}$  and the rest equal to $0$, i.e.,

$$
M=\left(
       \begin{array}{ccccccccc}
         0 & u_{n-1}^2 & 0 & 0 & 0 & \cdots & 0 & 0 & -u_0-u_{n-1}u_1 \\
         0 & 0 & 0 & 0 & 0 & \cdots & 0 & 0 & 0 \\
         0 & -1 & 0 & u_{n-1}^2 & 0 & \cdots & 0 & 0 & -u_2-u_{n-1}u_3 \\
         0 & 0 & 0 & 0 & 0 & \cdots & 0 & 0 & 0 \\
         0 & 0 & 0 & -1 & 0 & \cdots & 0 & 0 & -u_4-u_{n-1}u_5 \\
         \vdots & \vdots & \vdots & \vdots & \vdots & \ddots & \vdots & \vdots & \vdots \\
         0 & 0 & 0 & 0 & 0 & \cdots & u_{n-1}^2 & 0 & -u_{n-4}-u_{n-1}u_{n-3} \\
         0 & 0 & 0 & 0 & 0 & \cdots & 0 & 0 & 0 \\
         0 & 0 & 0 & 0 & 0 & \cdots & -1 & 0 & -u_{n-2} \\
         0 & 0 & 0 & 0 & 0 & \cdots & 0 & 0 & 0 \\
       \end{array}
     \right)
$$
Thus, we compute that $M^2=0$ and
$$A+M=\left(
       \begin{array}{cccccccccc}
         0 & u_{n-1}^2 & 0 & 0 & 0 & \cdots &0& 0 & 0 & -u_{n-1} u_1 \\
         1 & 0 & 0 & 0 & 0 & \cdots &0& 0 & 0 & u_1 \\
         0 & 0 & 0 & u_{n-1}^2 & 0 & \cdots &0& 0 & 0 & -u_{n-1}u_3 \\
         0 & 0 & 1 & 0 & 0 & \cdots &0& 0 & 0 & u_3 \\
         0 & 0 & 0 & 0 & 0 & \cdots &0& 0 & 0 & -u_{n-1}u_5 \\
         \vdots & \vdots & \vdots & \vdots & \vdots & \ddots & \vdots & \vdots & \vdots &\vdots \\
         0 & 0 & 0 & 0 & 0 & \cdots &0& u_{n-1}^2 & 0 & -u_{n-1} u_{n-3} \\
         0 & 0 & 0 & 0 & 0 & \cdots &1& 0 & 0 & u_{n-3} \\
         0 & 0 & 0 & 0 & 0 & \cdots &0& 0 & 0 & 0 \\
         0 & 0 & 0 & 0 & 0 & \cdots &0& 0 & 1 & u_{n-1} \\
       \end{array}
     \right).$$
With respect to the  basis $$B'=\{e_1, e_2,\dots, e_{n-2},\, u_{n-1}e_{n-1}+\sum_{i=1}^{(n-2)/2} u_{2i-1} e_{2i-1},\, u_{n-1}e_{n}+\sum_{i=1}^{(n-2)/2} u_{2i-1} e_{2i}\},$$ we deduce that
$$(A+M)_{B'}=\left(
       \begin{array}{cc|cc|cc|cc|cc}
         0 & u_{n-1}^2 & 0 & 0 & 0 & \cdots &0& 0 & 0 & 0 \\
         1 & 0 & 0 & 0 & 0 & \cdots &0& 0 & 0 & 0 \\
         \hline
         0 & 0 & 0 & u_{n-1}^2 & 0 & \cdots &0& 0 & 0 & 0 \\
         0 & 0 & 1 & 0 & 0 & \cdots &0& 0 & 0 & 0 \\
         \hline
         0 & 0 & 0 & 0 & 0 & \cdots &0& 0 & 0 & 0 \\
         \vdots & \vdots & \vdots & \vdots & \vdots & \ddots & \vdots & \vdots & \vdots \\
         \hline
         0 & 0 & 0 & 0 & 0 & \cdots &0& u_{n-1}^2 & 0 & 0 \\
         0 & 0 & 0 & 0 & 0 & \cdots &1& 0 & 0 & 0 \\
         \hline
         0 & 0 & 0 & 0 & 0 & \cdots &0& 0 & 0 & 0 \\
         0 & 0 & 0 & 0 & 0 & \cdots &0& 0 & 1 & u_{n-1} \\
       \end{array}
     \right),$$
which is the direct sum of blocks of the form $\left(\begin{array}{ccc}
                                                 0 & u_{n-1}^2 \\
                                                 1 & 0\\
                                               \end{array}
                                             \right)$ that satisfies $x^2-u_{n-1}^2$ and
a block of the form $\left(\begin{array}{ccc}
                                                 0 & 0 \\
                                                 1 & u_{n-1}\\
                                               \end{array}
                                             \right)$ that satisfies $x^2-u_{n-1}x$.

Therefore, the matrix $A+M$ satisfies the equation $(x-u_{n-1})(x+u_{n-1})x=0$, and so it is diagonalizable with eigenvalues $\{0,u_{n-1},-u_{n-1}\}$.

\medskip

(2) Let us study the case when $\text{Trace}(A)=0$ and $n$ odd: let us consider $M=(m_{ij})$, where $m_{k+1,k}=-1$ and $m_{k-1,k}=1$ for $k=2, 4, \dots,n-1$ and the rest equal to $0$, i.e.,

$$M=\left(
       \begin{array}{ccccccccc}
         0 & 1 & 0 & 0 & 0 & \cdots & 0 & 0 & 0 \\
         0 & 0 & 0 & 0 & 0 & \cdots & 0 & 0 & 0 \\
         0 & -1 & 0 & 1 & 0 & \cdots & 0 & 0 & 0 \\
         0 & 0 & 0 & 0 & 0 & \cdots & 0 & 0 & 0 \\
         0 & 0 & 0 & -1 & 0 & \cdots & 0 & 0 & 0 \\
         \vdots & \vdots & \vdots & \vdots & \vdots & \ddots & \vdots & \vdots & \vdots \\
         0 & 0 & 0 & 0 & 0 & \cdots & 0 & 1 & 0 \\
         0 & 0 & 0 & 0 & 0 & \cdots & 0 & 0 & 0 \\
         0 & 0 & 0 & 0 & 0 & \cdots & 0 & -1 & 0 \\
       \end{array}
     \right)
$$
Thus, we find that $M^2=0$ and
$$A+M=\left(
       \begin{array}{ccccccccc}
         0 & 1 & 0 & 0 & 0 & \cdots & 0 & 0 & u_0 \\
         1 & 0 & 0 & 0 & 0 & \cdots & 0 & 0 & u_1 \\
         0 & 0 & 0 & 1 & 0 & \cdots & 0 & 0 & u_2 \\
         0 & 0 & 1 & 0 & 0 & \cdots & 0 & 0 & u_3 \\
         0 & 0 & 0 & 0 & 0 & \cdots & 0 & 0 & u_4 \\
         \vdots & \vdots & \vdots & \vdots & \vdots & \ddots & \vdots & \vdots & \vdots \\
         0 & 0 & 0 & 0 & 0 & \cdots & 0 & 1 & u_{n-3} \\
         0 & 0 & 0 & 0 & 0 & \cdots & 1 & 0 & u_{n-2} \\
         0 & 0 & 0 & 0 & 0 & \cdots & 0 & 0 & 0 \\
       \end{array}
     \right).$$
With respect to the new basis $$B'=\{e_1, e_2,\dots, e_{n-1}, e_n-\sum_{i=1}^{(n-1)/2} u_{2i-2} e_{2i}-\sum_{i=1}^{(n-1)/2}u_{2i-1}e_{2i-1}\},$$ we derive that
$$(A+M)_{B'}=\left(
       \begin{array}{cc|cc|ccc|cc}
         0 & 1 & 0 & 0 & 0 & \cdots & 0 & 0 & 0 \\
         1 & 0 & 0 & 0 & 0 & \cdots & 0 & 0 & 0 \\
         \hline
         0 & 0 & 0 & 1 & 0 & \cdots & 0 & 0 & 0 \\
         0 & 0 & 1 & 0 & 0 & \cdots & 0 & 0 & 0 \\
         \hline
         0 & 0 & 0 & 0 & 0 & \cdots & 0 & 0 & 0 \\
         \vdots & \vdots & \vdots & \vdots & \vdots & \ddots & \vdots & \vdots & \vdots \\
         0 & 0 & 0 & 0 & 0 & \cdots & 0 & 1 & 0 \\
         \hline
         0 & 0 & 0 & 0 & 0 & \cdots & 1 & 0 & 0 \\
         0 & 0 & 0 & 0 & 0 & \cdots & 0 & 0 & 0 \\
       \end{array}
     \right),$$
which is the direct sum of blocks of the form $\left(
                                               \begin{array}{ccc}
                                                 0 & 1 \\
                                                 1 & 0\\
                                               \end{array}
                                             \right)$ that satisfy $x^2-1$ and a zero block of order one.
Therefore, $A+M$ satisfies the equation $(x-1)(x+1)x=0$, and so it is diagonalizable with eigenvalues $\{0,1,-1\}$.

\medskip

(3) Finally, let us study the case when $\text{Trace}(A)=0$ and $n$ is even with $\FF\ne \mathbb{Z}_3$: Since $|\FF|\ge 5$, let us consider $0\ne \alpha\in \FF$ such that $\alpha^2\ne 1$, and also let us consider $M=(m_{ij})$, where $m_{k+1,k}=-1$ and $m_{k-1,k}=1$ for $k=2, 4, \dots,n-2$, $m_{k,n}=-u_{k-1}$ for $k=1, 3, \dots,n-3$, $m_{n-1,n}=\alpha^2-u_{n-2}$ and the rest equal to $0$, i.e.,

$$
M=\left(
       \begin{array}{cccccccccc}
         0 & 1 & 0 & 0 & 0 & \cdots & 0& 0 & 0 & -u_0 \\
         0 & 0 & 0 & 0 & 0 & \cdots & 0& 0 & 0 & 0 \\
         0 & -1 & 0 & 1 & 0 & \cdots & 0& 0 & 0 & -u_2 \\
         0 & 0 & 0 & 0 & 0 & \cdots & 0& 0 & 0 & 0 \\
         0 & 0 & 0 & -1 & 0 & \cdots & 0& 0 & 0 & -u_4 \\
         \vdots & \vdots & \vdots & \vdots &\vdots & \ddots & \vdots & \vdots & \vdots & \vdots \\
         0 & 0 & 0 & 0 & 0 & \cdots &0 & 1 & 0 & -u_{n-4} \\
         0 & 0 & 0 & 0 & 0 & \cdots &0 & 0 & 0 & 0 \\
         0 & 0 & 0 & 0 & 0 & \cdots &0 & -1 & 0 & \alpha^2-u_{n-2} \\
         0 & 0 & 0 & 0 & 0 & \cdots & 0& 0 & 0 & 0 \\
       \end{array}
     \right)$$
Thus, we have that $M^2=0$ and
$$A+M=\left(
       \begin{array}{cccccccccc}
         0 & 1 & 0 & 0 & 0 & \cdots & 0& 0 & 0 & 0 \\
         1 & 0 & 0 & 0 & 0 & \cdots & 0& 0 & 0 & u_1 \\
         0 & 0 & 0 & 1 & 0 & \cdots & 0& 0 & 0 & 0 \\
         0 & 0 & 1 & 0 & 0 & \cdots & 0& 0 & 0 & u_3 \\
         0 & 0 & 0 & 0 & 0 & \cdots & 0& 0 & 0 & 0 \\
         \vdots & \vdots & \vdots & \vdots & \vdots & \ddots & \vdots& \vdots & \vdots & \vdots \\
         0 & 0 & 0 & 0 & 0 & \cdots & 0& 1 & 0 & 0 \\
         0 & 0 & 0 & 0 & 0 & \cdots & 1& 0 & 0 & u_{n-3} \\
         0 & 0 & 0 & 0 & 0 & \cdots & 0& 0 & 0 & \alpha^2 \\
         0 & 0 & 0 & 0 & 0 & \cdots & 0& 0 & 1 & 0 \\
       \end{array}
     \right).$$
With respect to the basis $$B'=\{e_1, e_2,\dots, e_{n-2},\, e_{n-1}+\sum_{i=1}^{(n-2)/2} {u_{2i-1}\over \alpha^2-1} e_{2i-1}, \, e_{n}+\sum_{i=1}^{(n-2)/2} {u_{2i-1}\over \alpha^2-1} e_{2i}\},$$ we obtain that
$$(A+M)_{B'}=\left(
       \begin{array}{cc|cc|cc|cc|cc}
         0 & 1 & 0 & 0 & 0 & \cdots & 0& 0 & 0 & 0 \\
         1 & 0 & 0 & 0 & 0 & \cdots & 0& 0 & 0 & 0 \\
         \hline
         0 & 0 & 0 & 1 & 0 & \cdots & 0& 0 & 0 & 0 \\
         0 & 0 & 1 & 0 & 0 & \cdots & 0& 0 & 0 & 0 \\
         \hline
         0 & 0 & 0 & 0 & 0 & \cdots & 0& 0 & 0 & 0 \\
         \vdots & \vdots & \vdots & \vdots & \vdots & \ddots & \vdots& \vdots & \vdots & \vdots \\
         \hline
         0 & 0 & 0 & 0 & 0 & \cdots & 0& 1 & 0 & 0 \\
         0 & 0 & 0 & 0 & 0 & \cdots & 1& 0 & 0 & 0 \\
         \hline
         0 & 0 & 0 & 0 & 0 & \cdots & 0& 0 & 0 & \alpha^2 \\
         0 & 0 & 0 & 0 & 0 & \cdots & 0& 0 & 1 & 0 \\
       \end{array}
     \right),$$
which consists of the direct sum of blocks of the form  $\left(\begin{array}{ccc}
                                                 0 & 1 \\
                                                 1 & 0\\
                                               \end{array}
                                             \right)$ that satisfies $x^2-1$ and
a block of the form $\left(\begin{array}{ccc}
                                                 0 & \alpha^2 \\
                                                 1 & 0\\
                                               \end{array}
                                             \right)$ that satisfies $x^2-\alpha^2$.

Therefore, $A+M$ satisfies the equation $(x-\alpha)(x+\alpha)(x-1)(x+1)=0$, and so it is diagonalizable with eigenvalues $\{1,-1,\alpha,-\alpha\}$, completing the arguments after all.
\end{proof}

It was shown in \cite[\S1]{DGL1} that matrices of order 2 can always be decomposed into a diagonalizable matrix and a square-zero matrix. This result, together with Proposition \ref{cnonderog}  lead to the following theorem, which solves positively the question posed by Breaz in \cite[Remark 7]{Br} for fields of odd cardinality at least 5.

\begin{theorem}
Let $\mathbb{F}$ be a finite field of odd cardinality $q$ with $q\ge 5$, and let $n\in \mathbb{N}$. Then, every matrix  $A\in\mathbb{M}_n(\mathbb{F})$ can be expressed as $A=D+M$, where $D$ is a diagonalizable matrix and $M$ is a square-zero matrix. In particular, $D^q=D$, so $A$ can be expressed as the sum of $q$-potent matrix and a square-zero matrix.
\end{theorem}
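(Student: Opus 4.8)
The plan is to reduce to the non-derogatory case handled in Proposition~\ref{cnonderog} and in \cite{DGL1} by passing to the rational canonical form. Two elementary observations make the reduction work. First, as noted at the start of the proof of Proposition~\ref{cnonderog}, the property of being a sum of a diagonalizable matrix and a square-zero matrix is invariant under similarity. Second, it is closed under direct sums: if $A_i=D_i+M_i$ with each $D_i$ diagonalizable and $M_i^2=0$, then $\bigoplus_i D_i$ is diagonalizable (conjugate the blocks simultaneously to diagonal form) and $\bigl(\bigoplus_i M_i\bigr)^2=\bigoplus_i M_i^2=0$, so $\bigoplus_i A_i=\bigl(\bigoplus_i D_i\bigr)+\bigl(\bigoplus_i M_i\bigr)$ is again of the required form.

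Now I would proceed as follows. Write $A\in\mathbb{M}_n(\mathbb{F})$ in rational canonical form, so that $A$ is similar to $C(f_1)\oplus\cdots\oplus C(f_s)$, where $f_1\mid f_2\mid\cdots\mid f_s$ are the invariant factors of $A$ and $C(f_i)$ is the companion matrix of $f_i$, a non-derogatory matrix of order $m_i=\deg f_i$ with $m_1+\cdots+m_s=n$. By the two observations above it suffices to decompose an arbitrary companion matrix $C$, of order $m$ say. If $m=1$, take $M=0$, since $C$ is already diagonal. If $m=2$, use the first assertion of \cite[Theorem 3.6]{DGL1}, that every $2\times 2$ matrix over any field admits such a decomposition. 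If $m\in\{3,4\}$, apply \cite[Lemma 3.1]{DGL1}; its cardinality hypotheses, namely $|\mathbb{F}|\ge m$ when $\text{Trace}(C)=0$ and $|\mathbb{F}|\ge m+1$ when $\text{Trace}(C)\ne 0$, hold because $q\ge 5\ge m+1$. Finally, if $m\ge 5$, apply Proposition~\ref{cnonderog}, whose standing hypotheses (odd characteristic, $|\mathbb{F}|=q\ge 5$) are exactly the ones at hand. Reassembling the blocks and conjugating back yields $A=D+M$ with $D$ diagonalizable and $M^2=0$.

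The ``in particular'' is then immediate from the remark in the Backgrounds section: a matrix $D\in\mathbb{M}_n(\mathbb{F}_q)$ that is diagonalizable over $\mathbb{F}_q$ has all its eigenvalues in $\mathbb{F}_q$, so its minimal polynomial divides $\prod_{\lambda\in\mathbb{F}_q}(x-\lambda)=x^q-x$; hence $D^q=D$, i.e.\ $D$ is $q$-potent. (This is also transparent from the constructions above, whose diagonal summands have eigenvalues in small explicit subsets of $\mathbb{F}_q$.)

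I do not expect a genuine obstacle here, since the substantive computation is already contained in Proposition~\ref{cnonderog}; what remains is the routine passage to the rational canonical form together with the finite case check $m\in\{1,2,3,4\}$. The only point demanding a little care is that the cardinality bounds of \cite[Lemma 3.1]{DGL1} for the orders $m=3,4$ are satisfied precisely when $q\ge 5$ — which is exactly the hypothesis of the theorem, and not a coincidence, since for $q=3$ the order-$3$ companion blocks are the source of the counterexamples established later in the paper.
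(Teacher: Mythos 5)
Your proposal is correct and follows essentially the same route as the paper: reduce via the rational canonical form to companion (non-derogatory) blocks, handle orders $\le 4$ with the results of \cite{DGL1} (the paper cites \cite[Theorem 3.6]{DGL1} for orders $2$--$4$ where you invoke \cite[Lemma 3.1]{DGL1} for orders $3,4$, an immaterial difference since $q\ge 5$ meets either cardinality bound), and apply Proposition~\ref{cnonderog} for orders $\ge 5$. Your explicit remarks on similarity invariance, closure under direct sums, and the $D^q=D$ consequence merely spell out steps the paper leaves implicit.
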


\begin{proof}
Let $A\in\mathbb{M}_n(\mathbb{F})$ and suppose that $f_1(x)|\dots|f_k(x)$ are its invariant factors; in view of the rational canonical decomposition, $A$ is similar to the direct sum of the companion matrices of $f_1(x),\dots, f_k(x)$.

\medskip

\noindent -- The companion matrices of polynomials of degree less than or equal to $4$ can be decomposed as the sum of a diagonalizable matrix and a square-zero matrix in accordance with the arguments of \cite{DGL1}: those of order 1 are trivially decomposed using the zero matrix; those of order 2, 3 and 4 can always be decomposed thanks to \cite[Theorem 3.6]{DGL1}, because the cardinality of the field $q$ is at least 5 by hypothesis.

\medskip

\noindent -- The companion matrices of polynomials of degree at least 5 can be decomposed as the sum of a diagonalizable matrix and a square-zero matrix according to Proposition \ref{cnonderog}.
\end{proof}

\section{Matrices over the field $\mathbb{F}_3$}

In the previous section we have completely solved the problem of decomposing matrices into diagonalizable and square-zero for fields of odd cardinality at least five. In this section, let us consider matrices over $\mathbb{F}_3$. Notice that in Proposition \ref{cnonderog} we have not included non-derogative matrices of even order and zero trace over $\mathbb{F}_3$. So, let us study them now in what follows.

\begin{proposition}\label{cnonderog3}
Let $A\in\mathbb{M}_n(\mathbb{F}_3)$ be a non-derogative matrix of even order $n$ and  zero trace. Then, there exists a square-zero matrix $M$ such that $A+M$ is diagonalizable.
\end{proposition}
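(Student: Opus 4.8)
The plan is to follow the pattern of Proposition~\ref{cnonderog}: conjugate $A$ to the companion matrix of its characteristic polynomial $p(x)=x^{n}-u_{n-2}x^{n-2}-\cdots-u_{0}$ (there is no $x^{n-1}$ term because $\text{Trace}(A)=0$), then produce an explicit square-zero $M$ and a basis $B'$ for which $(A+M)_{B'}$ is a direct sum of small blocks, each annihilated by $x^{3}-x$; once that is done, $(A+M)^{3}=A+M$, so $A+M$ is diagonalizable over $\mathbb{F}_{3}$ and $A=(A+M)-M$ is the required decomposition. The companion-shaped blocks annihilated by $x^{3}-x$ are $\left(\begin{smallmatrix}0&1\\1&0\end{smallmatrix}\right)$ (eigenvalues $1,-1$), the $3\times 3$ companion matrix of $x^{3}-x$ (eigenvalues $0,1,-1$), and the $2\times 2$ companion matrices of $x^{2}-x$ and $x^{2}+x$ (eigenvalues $0,1$ and $0,-1$). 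Since $\text{Trace}(A+M)=\text{Trace}(A)=0$, the multiset of blocks must have total trace $0$, so the admissible targets are: $n/2$ copies of $\left(\begin{smallmatrix}0&1\\1&0\end{smallmatrix}\right)$; or one companion of $x^{2}-x$ and one of $x^{2}+x$ together with $(n-4)/2$ copies of $\left(\begin{smallmatrix}0&1\\1&0\end{smallmatrix}\right)$; or two companions of $x^{3}-x$ together with $(n-6)/2$ copies of $\left(\begin{smallmatrix}0&1\\1&0\end{smallmatrix}\right)$. The small cases $n=2,4$ I would dispatch separately, $n=2$ being already covered by \cite{DGL1}.

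For $M$ itself I would, as in Proposition~\ref{cnonderog}, lay the $\left(\begin{smallmatrix}0&1\\1&0\end{smallmatrix}\right)$ blocks along the leading coordinates, keep the sub-diagonal $1$'s of the companion matrix where they are needed and cancel the ones joining consecutive blocks, insert the missing super-diagonal $1$'s, and use the remaining freedom to neutralize the last column $(u_{0},\dots,u_{n-2},0)^{t}$, partly through entries of $M$ and partly through the basis $B'$. The reason the argument of Proposition~\ref{cnonderog}(3) cannot simply be copied over $\mathbb{F}_{3}$ is that it used a scalar $\alpha$ with $\alpha^{2}\neq1$ to give the trailing $2\times2$ block $\left(\begin{smallmatrix}0&\alpha^{2}\\1&0\end{smallmatrix}\right)$ spectrum disjoint from $\{1,-1\}$, so that the residual part of the last column could be absorbed by solving a Sylvester equation $BX-XC=-Q$ with $\mathrm{spec}(B)\cap\mathrm{spec}(C)=\emptyset$; over $\mathbb{F}_{3}$ every block annihilated by $x^{3}-x$ has spectrum inside $\{0,1,-1\}$, so to keep it disjoint from the spectrum $\{1,-1\}$ of the $\left(\begin{smallmatrix}0&1\\1&0\end{smallmatrix}\right)$-part one is forced either to make the trailing block vanish — turning $A+M$ into the shape $\left(\begin{smallmatrix}B&Q\\0&0\end{smallmatrix}\right)$ with $B^{2}=I$, which is automatically annihilated by $x^{3}-x$ and needs no Sylvester step — or to bring in an eigenvalue-$0$ direction, which is exactly what the companions of $x^{3}-x$ and $x^{2}\pm x$ provide, while simultaneously fixing the parity for even $n$.

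The step I expect to be the main obstacle is securing $M^{2}=0$ together with the block decomposition. On the canonical basis, $M^{2}=0$ fails as soon as $M$ sends $e_{i}\mapsto e_{j}$ and $e_{j}\mapsto e_{k}\neq0$; but cancelling a connecting sub-diagonal entry of the companion matrix creates an edge $e_{j}\mapsto e_{j+1}$, and tidying the last column creates edges out of $e_{n}$, so one must avoid length-two paths such as $e_{n-2}\mapsto e_{n-1}\mapsto e_{n}$ or $e_{n-1}\mapsto e_{n}\mapsto e_{1}$. As in the cases of Proposition~\ref{cnonderog}, the remedy is to arrange the layout so that the set of columns of $M$ carrying entries (the ``sources'') is disjoint from the set of rows that receive them (the ``targets''); reconciling this with the necessity of a vanishing trailing block forces, I expect, a case analysis driven by the characteristic polynomial — most plausibly according to whether $x\mid p(x)$ (i.e.\ $u_{0}=0$) and according to the value of the coefficient $u_{n-2}$, since it is precisely the non-vanishing of these coefficients that obstructs the naive routing of the last column.
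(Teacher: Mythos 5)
Your proposal is a plan, not a proof: the decisive content of this proposition is the explicit square-zero matrix $M$ (together with the basis $B'$ exhibiting the block decomposition), and you never produce it. You correctly identify the target block shapes with eigenvalues in $\{0,1,-1\}$ and the trace constraint, and you correctly diagnose the obstacle (keeping $M^2=0$ while the cancelled sub-diagonal entries and the entries used to clean the last column must not create length-two paths), but you stop at ``I expect a case analysis driven by $u_0$ and $u_{n-2}$.'' No such verification of $M^2=0$, no choice of the entries $m_{k,n}$, and no basis change is carried out, so the statement is not established. For comparison, the paper's construction for the hard case is uniform in the coefficients $u_i$ (no case split on $u_0$ or $u_{n-2}$): it places $\left(\begin{smallmatrix}0&1\\1&0\end{smallmatrix}\right)$ blocks along the leading $n-4$ coordinates and ends with a $2\times 2$ block similar to the companion of $x^2+x$ and one similar to the companion of $x^2-x$ (your second ``admissible target''), the residual last column being absorbed entirely by the basis vectors replacing $e_{n-1}$ and $e_n$; there is no Sylvester-equation issue to worry about because the absorption is done by an explicit change of basis, not by solving $BX-XC=-Q$ with disjoint spectra.

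You also miss a cheap reduction that the paper uses and that would have cut your problem down substantially: if $n$ is even and $3\nmid n$, then $A-I$ is still non-derogatory and has trace $-n\neq 0$ in $\mathbb{F}_3$, so Proposition~\ref{cnonderog}(1) already yields a square-zero $M$ with $A-I+M$ diagonalizable, and then $A+M=(A-I+M)+I$ is diagonalizable too. Hence the only case needing a new explicit construction is $n\equiv 0\pmod 6$, and that is exactly the case where your sketch leaves the work undone. To complete your argument you would need to write down $M$ for this case (or some equivalent device), check $M^2=0$ entry-wise, and verify the block structure in the new basis --- the very steps your proposal defers.
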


\begin{proof}
If the order $n$ of $A$ is even and not divisible by $3$, we can consider the matrix $A-I$, which has non-zero trace. So, Proposition \ref{cnonderog}(1) applies to get a square-zero matrix $M$ such that $A-I+M=D$ is diagonalizable, whence $A+M=D+I$ is diagonalizable as well.

Assume from now on that $n$ is even and a multiple of $3$, i.e., $n=6k$ for $k\in \NN$. Let us suppose that  $$A=\left(
\begin{array}{ccccccccc}
 0 & 0 & 0 & 0 & 0 & \cdots & 0 & 0 & u_0 \\
 1 & 0 & 0 & 0 & 0 & \cdots & 0 & 0 & u_1 \\
 0 & 1 & 0 & 0 & 0 & \cdots & 0 & 0 & u_2 \\
 0 & 0 & 1 & 0 & 0 & \cdots & 0 & 0 & u_3 \\
 0 & 0 & 0 & 1 & 0 & \cdots & 0 & 0 & u_4 \\
 0 & 0 & 0 & 0 & 1 & \vdots & 0 & 0 & u_5 \\
 \vdots & \vdots & \vdots & \vdots & \vdots & \ddots & \vdots & \vdots & \vdots \\
 0 & 0 & 0 & 0 & 0 & \cdots & 1 & 0 & u_{n-2} \\
 0 & 0 & 0 & 0 & 0 & \cdots & 0 & 1 & 0 \\
\end{array}
\right),$$ and consider $M=(m_{ij})$, where $m_{k+1,k}=-1$ and $m_{k-1,k}=1$ for $k=2, 4, \dots,n-4$, $m_{n-1,n-1}=m_{n-2,n-1}=1$, $m_{n-1,n-2}=m_{n-2,n-2}=-1$, $m_{k,n}=-u_{k-1}-u_k$ for $k=1, 3, \dots,n-5$, $m_{n-3,n}=u_{n-4}$, $m_{n-2,n}=m_{n-1,n}=u_{n-2}$ and the rest equal to $0$, i.e.,

$$M=\left(
\begin{array}{cccccccccccc}
 0 & 1 & 0 & 0 & 0 & \cdots & 0 & 0 &0&0&0& -u_0-u_1 \\
 0 & 0 & 0 & 0 & 0 & \cdots & 0 & 0 &0&0&0& 0 \\
 0 & -1 & 0 & 1 & 0 & \cdots & 0 & 0 &0&0&0& -u_2-u_3 \\
 0 & 0 & 0 & 0 & 0 & \cdots & 0 & 0 &0&0&0& 0 \\
 0 & 0 & 0 & -1 & 0 & \cdots & 0 & 0 &0&0&0& -u_4-u_5 \\
 0 & 0 & 0 & 0 & 0 & \vdots & 0 & 0 &0&0&0& 0 \\
 \vdots & \vdots & \vdots & \vdots & \vdots & \ddots &\vdots & \vdots & \vdots& \vdots & \vdots & \vdots \\
 0 & 0 & 0 & 0 & 0 & \cdots & 0 & 1 &0&0&0& -u_{n-6}-u_{n-5} \\
 0 & 0 & 0 & 0 & 0 & \cdots & 0 & 0 &0&0&0& 0 \\
 0 & 0 & 0 & 0 & 0 & \cdots & 0 & -1 &0&0&0& -u_{n-4} \\
 0 & 0 & 0 & 0 & 0 & \cdots & 0 & 0 &0&-1&1& -u_{n-2} \\
 0 & 0 & 0 & 0 & 0 & \cdots & 0 & 0 &0&-1&1& -u_{n-2} \\
 0 & 0 & 0 & 0 & 0 & \cdots & 0 & 0 &0&0&0& 0 \\
\end{array}
\right).$$
Thus, we have that $M^2=0$ and, if we consider $A+M$, we obtain
$$A+M=\left(
 \begin{array}{cccccccccccc}
 0 & 1 & 0 & 0 & 0 & \cdots & 0 & 0 &0&0&0& -u_1 \\
 1 & 0 & 0 & 0 & 0 & \cdots & 0 & 0 &0&0&0& u_1 \\
 0 & 0 & 0 & 1 & 0 & \cdots & 0 & 0 &0&0&0& -u_3 \\
 0 & 0 & 1 & 0 & 0 & \cdots & 0 & 0 &0&0&0& u_3 \\
 0 & 0 & 0 & 0 & 0 & \cdots & 0 & 0 &0&0&0& -u_5 \\
 0 & 0 & 0 & 0 & 1 & \vdots & 0 & 0 &0&0&0& u_5 \\
 \vdots & \vdots & \vdots & \vdots & \vdots & \ddots &\vdots & \vdots & \vdots& \vdots & \vdots & \vdots \\
 0 & 0 & 0 & 0 & 0 & \cdots & 0 & 1 &0&0&0& -u_{n-5} \\
 0 & 0 & 0 & 0 & 0 & \cdots & 1 & 0 &0&0&0& u_{n-5} \\
 0 & 0 & 0 & 0 & 0 & \cdots & 0 & 0 &0&0&0& 0 \\
 0 & 0 & 0 & 0 & 0 & \cdots & 0 & 0 &1&-1&1& -u_{n-3}-u_{n-2} \\
 0 & 0 & 0 & 0 & 0 & \cdots & 0 & 0 &0&0&1& 0 \\
 0 & 0 & 0 & 0 & 0 & \cdots & 0 & 0 &0&0&1& 0 \\
\end{array}
\right).$$
With respect to the basis
\begin{align*}B'&=\{e_1, e_2,\dots, e_{n-2},\\ &e_{n-1}+\sum_{i=1}^{(n-4)/2} -u_{2i-1} e_{2i}+(u_{n-3}-u_{n-2})e_{n-3}+\frac{(u_{n-3}-u_{n-2}+1)}{2}e_{n-2},\\& e_n +\sum_{i=1}^{(n-4)/2} u_{2i-1} (e_{2i}-e_{2i-1})+(u_{n-2}-u_{n-3})e_{n-3}\},\end{align*}
we get
$$(A+M)_{B'}=\left(
\begin{array}{cc|cc|cc|c|cc|cc|cc}
 0 & 1 & 0 & 0 & 0 &0& \cdots & 0 & 0 &0&0&0& 0 \\
 1 & 0 & 0 & 0 & 0 &0& \cdots & 0 & 0 &0&0&0& 0 \\
 \hline
 0 & 0 & 0 & 1 & 0 &0& \cdots & 0 & 0 &0&0&0& 0 \\
 0 & 0 & 1 & 0 & 0 &0& \cdots & 0 & 0 &0&0&0& 0 \\
 \hline
 0 & 0 & 0 & 0 & 0 &0& \cdots & 0 & 0 &0&0&0& 0 \\
 0 & 0 & 0 & 0 & 1 &0& \vdots & 0 & 0 &0&0&0& 0 \\
 \hline
 \vdots & \vdots & \vdots &\vdots & \vdots & \vdots & \ddots &\vdots & \vdots & \vdots& \vdots & \vdots & \vdots \\
 \hline
 0 & 0 & 0 & 0 & 0 &0& \cdots & 0 & 1 &0&0&0& 0 \\
 0 & 0 & 0 & 0 & 0 &0& \cdots & 1 & 0 &0&0&0& 0 \\
 \hline
 0 & 0 & 0 & 0 & 0 &0& \cdots & 0 & 0 &0&0&0& 0 \\
 0 & 0 & 0 & 0 & 0 &0& \cdots & 0 & 0 &1&-1&0& 0 \\
 \hline
 0 & 0 & 0 & 0 & 0 &0 &  \cdots  & 0 &0&0&0&1& 0 \\
 0 & 0 & 0 & 0 & 0 &0& \cdots  & 0 &0&0&0&1& 0 \\
\end{array}
\right),$$
which consists of the direct sum of blocks of the form  $\left(
                                               \begin{array}{ccc}
                                                 0 & 1 \\
                                                 1 & 0\\
                                               \end{array}
                                             \right)$ that satisfy $x^2-1$,
a block of the form  $\left(
                                               \begin{array}{ccc}
                                                 0 & 0 \\
                                                 1 & -1\\
                                               \end{array}
                                             \right)$ that satisfies $x^2+x$, and a block of the form  $\left(
                                               \begin{array}{ccc}
                                                 1 & 0 \\
                                                 1 & 0\\
                                               \end{array}
                                             \right)$ that satisfies $x^2-x$.

Therefore, $A+M$ satisfies the equation$(x-1)(x+1)x=0$, and so it is diagonalizable with eigenvalues $\{0,1,-1\}$, as desired.
\end{proof}

\begin{remark}\label{counterexampleBreaz}
It is a well-known fact by \cite[Example 6]{Br} that the companion matrices of  non-zero trace irreducible polynomials of degree 3 cannot be decomposed into diagonalizable and zero-square.  Thanks to the previous results, these are the only possible counterexamples for non-derogatory matrices as the next consequence demonstrates.
\end{remark}

\begin{corollary}\label{nonderogatoryF3} Every non-derogatory matrix over $\mathbb{F}_3$ can be decomposed as the sum of a diagonalizable matrix and a square-zero matrix unless it is similar to the companion matrix of a degree 3 irreducible polynomial of non-zero trace.
\end{corollary}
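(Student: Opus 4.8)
The plan is to reduce to companion matrices and then split on the order $n$, using the propositions already established for $n\ge 5$ and handling $n\le 4$ directly. First I would record that the property ``$A=D+M$ with $D$ diagonalizable and $M^2=0$'' is invariant under similarity (as noted in the proof of Proposition~\ref{cnonderog}) and is preserved by direct sums; since a non-derogatory $A$ is similar to the companion matrix $C(p)$ of its characteristic polynomial $p$, which equals its minimal polynomial and has degree $n$, I may assume $A=C(p)$. For $n\ge 5$ the result is then immediate: if $\mathrm{Trace}(A)\ne 0$ use Proposition~\ref{cnonderog}(1); if $\mathrm{Trace}(A)=0$ and $n$ is odd use Proposition~\ref{cnonderog}(2); if $\mathrm{Trace}(A)=0$ and $n$ is even use Proposition~\ref{cnonderog3}. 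The orders $n=1$ (take $M=0$) and $n=2$ (by \cite[Theorem 3.6]{DGL1}) are clear as well.

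For $n=3$: if $\mathrm{Trace}(A)=0$, then \cite[Lemma 3.1]{DGL1} applies since $|\mathbb{F}_3|=3\ge n$. If $\mathrm{Trace}(A)\ne 0$ and $p$ is reducible, then $p$ cannot be $(x-a)^3$ (whose companion matrix has trace $3a=0$), so $p$ factors as a product of pairwise coprime polynomials of degree at most $2$ (the only non-squarefree shape being $(x-a)^2(x-b)$ with $a\ne b$); in every such case $A$ is similar to a direct sum of companion blocks of order $\le 2$, each decomposing by the $n\le 2$ case, hence so does $A$. If $\mathrm{Trace}(A)\ne 0$ and $p$ is irreducible, then $A$ is similar to the companion matrix of a non-zero trace irreducible cubic, which is precisely the excluded matrix and does not admit such a decomposition by \cite[Example 6]{Br}.

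For $n=4$: I would check that the construction in the proof of Proposition~\ref{cnonderog}, case (1.b), goes through verbatim at $n=4$. Writing $\tau=\mathrm{Trace}(A)=u_{n-1}$ and using the same $M$, one gets $M^2=0$, and in the basis $B'$ of that proof $A+M$ becomes the direct sum of one block satisfying $x^2-\tau^2$ and one block satisfying $x^2-\tau x$; hence $A+M$ satisfies the separable polynomial $x(x-\tau)(x+\tau)$ (its roots $0,\tau,-\tau$ are distinct because $\tau\ne 0$ and $\mathrm{char}\,\mathbb{F}_3\ne 2$), so $A+M$ is diagonalizable. This handles every non-derogatory $4\times 4$ matrix of non-zero trace. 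If $\mathrm{Trace}(A)=0$, apply this to $A-I$, which is again non-derogatory of order $4$ and has trace $-4=-1\ne 0$; a decomposition $A-I=D+M$ then gives $A=(D+I)+M$ with $D+I$ diagonalizable. Collecting all cases yields the corollary.

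The main obstacle is the order-$4$ case. One cannot dispose of it by a block-by-block (primary decomposition) argument: the exceptional $3\times 3$ block $C(g)$, with $g$ an irreducible cubic of non-zero trace, really does occur as a primary summand of a non-derogatory $4\times 4$ matrix, namely inside $C\big((x-a)g(x)\big)$, and yet that $4\times 4$ matrix must still decompose. So one is forced to argue on the whole matrix, and the point is that Proposition~\ref{cnonderog} is restricted to $n\ge 5$ only because its \emph{odd}-order construction degenerates at $n=3$ (there $A+M$ ends up with minimal polynomial $x^2(x-\tau)$, which is not separable), whereas the \emph{even}-order construction of case (1.b) is already valid at $n=4$; verifying this, together with the reduction of the zero-trace subcase via $A\mapsto A-I$, is what closes the remaining gap.
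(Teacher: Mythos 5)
Your argument is correct, and for orders $1$, $2$, $3$ (zero trace) and $n\ge 5$ it follows the same skeleton as the paper (the $2\times 2$ case and \cite[Lemma 3.1]{DGL1}, then Propositions~\ref{cnonderog}(1)(2) and~\ref{cnonderog3}). Where you genuinely diverge is at the small orders: the paper simply cites \cite[Theorem 3.6, Proposition 5.4]{DGL1} to dispose of orders $1$, $2$ and $4$, whereas you re-derive the order-$4$ case inside this paper by checking that the even-order, non-zero-trace construction of Proposition~\ref{cnonderog}(1.b) works verbatim at $n=4$ (your computation is right: with $\tau=u_3\ne 0$ one gets $M^2=0$ and, in the basis $\{e_1,e_2,\,\tau e_3+u_1e_1,\,\tau e_4+u_1e_2\}$, the blocks $\left(\begin{smallmatrix}0&\tau^2\\ 1&0\end{smallmatrix}\right)$ and $\left(\begin{smallmatrix}0&0\\ 1&\tau\end{smallmatrix}\right)$, so $A+M$ annihilates the separable polynomial $x(x-\tau)(x+\tau)$), and you handle zero trace at $n=4$ by the shift $A\mapsto A-I$, whose trace is $-4=-1\ne 0$ over $\mathbb{F}_3$. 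You also make explicit the order-$3$, non-zero-trace, reducible case via the primary decomposition into coprime blocks of order at most $2$ (using that $(x-a)^3$ has zero trace in characteristic $3$), a case the paper's short proof leaves to the citations from \cite{DGL1}. The trade-off: the paper's proof is shorter but leans on \cite[Proposition 5.4]{DGL1}, which is external to this article; your version is self-contained modulo the $2\times 2$ result and \cite[Lemma 3.1]{DGL1}, and as a by-product it shows that the restriction $n\ge 5$ in Proposition~\ref{cnonderog}(1) is only needed for the odd-order construction, the even-order one already being valid at $n=4$. Your closing observation that a block-by-block reduction cannot work at order $4$ (because the exceptional irreducible cubic block can occur inside a decomposable $4\times 4$ matrix) is a sound justification for arguing on the whole matrix there, and the excluded case is correctly pinned to \cite[Example 6]{Br}.
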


\begin{proof}
For matrices of order 1, 2 and 4, the claim follows directly from \cite[Theorem 3.6, Proposition 5.4]{DGL1}. Non-derogative matrices of order 3 and zero trace can be decomposed in this way by \cite[Lemma 3.1]{DGL1}.
For non-derogative matrices of order at least 5, the decomposition follows with the aid of Propositions \ref{cnonderog}(1)(2) and \ref{cnonderog3}.
\end{proof}

We can extend this result to general matrices over $\mathbb{F}_3$ unless we cannot elude the counterexample mentioned in Remark \ref{counterexampleBreaz}.

\begin{corollary}
Let $A\in\mathbb{M}_n(\mathbb{F}_3)$ and suppose that the first invariant factor of $A$ is not a degree 3 irreducible polynomial of non-zero trace. Then, there exists a square-zero matrix $M$ and a diagonalizable matrix $D$  such that $A=D+M$.
\end{corollary}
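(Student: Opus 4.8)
The plan is to reduce to the non-derogatory case via the rational canonical form and then quote Corollary~\ref{nonderogatoryF3}. First I would record the two elementary closure properties that make such a reduction legitimate: the property of being a sum of a diagonalizable matrix and a square-zero matrix is invariant under similarity (if $A=SBS^{-1}$ and $B=D+M$ with $D$ diagonalizable and $M^2=0$, then $A=(SDS^{-1})+(SMS^{-1})$ is a decomposition of the same kind, exactly as noted at the start of the proof of Proposition~\ref{cnonderog}), and it is stable under direct sums, since a block-diagonal matrix whose blocks are diagonalizable is diagonalizable and $\bigl(\bigoplus_i M_i\bigr)^2=\bigoplus_i M_i^2$. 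Hence, writing $f_1(x)\mid f_2(x)\mid\cdots\mid f_k(x)$ for the (non-constant) invariant factors of $A$, so that $A$ is similar to $C(f_1)\oplus\cdots\oplus C(f_k)$ where $C(f_i)$ denotes the companion matrix of $f_i$, it suffices to prove that every block $C(f_i)$ admits the desired decomposition.

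Each $C(f_i)$ is non-derogatory, its minimal and characteristic polynomials both being $f_i$, so Corollary~\ref{nonderogatoryF3} applies to it; the only way $C(f_i)$ could fail to decompose is if $f_i$ were an irreducible polynomial of degree $3$ with non-zero trace. The key observation is that the divisibility chain rules this out under our hypothesis: if some $f_j$ were irreducible of degree $3$, then from $f_1\mid f_j$ and the fact that $f_1$ is monic and non-constant we would get $f_1=f_j$, making $f_1$ itself an irreducible degree-$3$ polynomial of non-zero trace, contrary to assumption. Consequently no $f_i$ is of the forbidden type, every $C(f_i)$ decomposes as a sum of a diagonalizable matrix and a square-zero matrix by Corollary~\ref{nonderogatoryF3}, and reassembling the blocks (and transporting back along the similarity) yields $A=D+M$ with $D$ diagonalizable and $M^2=0$.

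I do not expect a genuine obstacle here: all the substantive work is already contained in Propositions~\ref{cnonderog} and~\ref{cnonderog3} and in Corollary~\ref{nonderogatoryF3}. The single point that deserves a line of care is the elementary divisibility argument above, which shows that controlling the smallest invariant factor $f_1$ already controls every invariant factor insofar as the exceptional ``degree-$3$ irreducible of non-zero trace'' companion block is concerned; this is also what explains why the hypothesis is phrased in terms of $f_1$ rather than in terms of all the invariant factors or of the minimal polynomial $f_k$.
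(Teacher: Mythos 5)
Your proposal is correct and follows essentially the same route as the paper: decompose $A$ into companion blocks of its invariant factors, use the divisibility chain $f_1\mid\cdots\mid f_k$ to conclude that no invariant factor can be an irreducible degree-$3$ polynomial of non-zero trace, and apply Corollary~\ref{nonderogatoryF3} to each block. The paper leaves the similarity and direct-sum closure properties implicit, but your argument is the same one.
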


\begin{proof}
Suppose that $f_1(x)|\dots| f_k(x)$ are the invariant factors of $A$; by hypothesis,  $f_1(x)$ is not a degree 3 irreducible polynomial of non-zero trace. This condition implies that non of the invariant factors are degree 3 irreducible polynomials of non-zero trace, so their associated companion matrices can all of them be decomposed into a diagonalizable matrix and square-zero matrix by Corollary \ref{nonderogatoryF3}.
\end{proof}

In the following proposition we show that direct sums of companion matrices, all of them associated to the same irreducible degree 3 polynomial with non-zero trace, cannot be decomposed as the sum of a diagonalizable matrix and a square-zero matrix, providing counterexamples to the decomposition into diagonalizable and square-zero for matrices of each order $n=3k$, $k\ge 1$.

\begin{proposition}\label{counterexamplehighorders}
Let  $p(x)\in \mathbb{F}_3[x]$ be an irreducible polynomial of degree $3$ and non-zero trace and let   $A\in\mathbb{M}_n(\mathbb{F}_3)$ be a matrix which is similar to a direct sum of companion matrices all of them associated to $p(x)$.  Then, there does not exist a square-zero matrix $M$ such that $A+M$ is diagonalizable.
\end{proposition}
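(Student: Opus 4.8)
Since being a sum of a diagonalizable matrix and a square-zero matrix is a similarity invariant, it suffices to treat an endomorphism $A$ of $V=\mathbb{F}_3^{n}$ with $p(A)=0$; write $p(x)=x^{3}+ax^{2}+bx+c$, note that $p$ being irreducible forces $3\mid n$, say $n=3k$, that it has no root in $\mathbb{F}_3$ (so $A-\mu I$ is invertible for every $\mu\in\mathbb{F}_3$), and that the trace of $p$ is non-zero exactly when $a\neq 0$. Assume toward a contradiction that $A=D+M$ with $M^{2}=0$ and $D$ diagonalizable over $\mathbb{F}_3$, equivalently $D^{3}=D$; put $M=D-A$ and $r=\rank M$.

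The first step — and the only one that uses the non-zero trace — is to show $\Ker M\cap\Ker(MA)=0$. If $Mv=0$ and $MAv=0$, then $Dv=Av$, $D^{2}v=A^{2}v$, and $D^{3}v=A^{3}v+MA^{2}v$, so $D^{3}=D$ gives $MA^{2}v=(A-A^{3})v$; applying $M$ once more and using $M^{2}=0$ together with $A^{3}=-aA^{2}-bA-cI$ yields $a\,MA^{2}v=0$, hence $MA^{2}v=0$ since $a\in\mathbb{F}_3^{\times}$, and then $aA^{2}v+(b+1)Av+cv=0$, so $v,Av,A^{2}v$ are linearly dependent — impossible for $v\neq 0$, as $p$ is irreducible of degree $3$. (For $a=0$ this collapses, in agreement with the fact that trace-zero cubics do decompose.) Since $M^{2}=0$ always gives $\im M\subseteq\Ker M$, and hence $r\le n/2$, while $\dim(\Ker M\cap\Ker(MA))\ge 2\dim\Ker M-n=n-2r$, this step forces $r=n/2$. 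If $k$ is odd this is already absurd — in particular the case $k=1$ recovers \cite[Example~6]{Br}. If $k$ is even, then $\Ker M=\im M=:K$ (equal dimensions, one inside the other) and $K\cap AK=0$ (apply the invertible $A$ to $\Ker M\cap\Ker(MA)=0$), so $V=K\oplus AK$ with $A|_{K}\colon K\xrightarrow{\ \sim\ }AK$.

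Passing to the block-companion coordinates $V\cong K\oplus K$ afforded by this splitting, we have $A=\left(\begin{smallmatrix}0&\phi\\ I&\eta\end{smallmatrix}\right)$ for operators $\phi,\eta$ on $K$, and — since $\Ker M=\im M=K$ is the first summand — $M=\left(\begin{smallmatrix}0&m\\ 0&0\end{smallmatrix}\right)$, so $D=\left(\begin{smallmatrix}0&\phi+m\\ I&\eta\end{smallmatrix}\right)$. Expanding $D^{2}$ and $D^{3}$ blockwise, $D^{3}=D$ is equivalent to $\eta^{3}=\eta$ and $\phi+m=I-\eta^{2}$; expanding $A^{2},A^{3}$ and imposing $p(A)=0$, the $(2,1)$-block gives $\phi=-(\eta^{2}+a\eta+bI)$ and the $(1,1)$-block, after substituting and using $\eta^{3}=\eta$, gives $2a\,\eta^{2}+(a^{2}+b+1)\eta-(c-ab)I=0$, the other two blocks then holding automatically. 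As $a\neq 0$ (so $2a\neq 0$), the last identity writes $\eta^{2}=\beta\eta+\delta I$ for explicit $\beta,\delta\in\mathbb{F}_3$; substituting into $\eta^{3}=\eta$ gives $(\beta^{2}+\delta-1)\eta=-\beta\delta\,I$. If $\beta^{2}+\delta-1\neq 0$, then $\eta$, and therefore $\phi$, is a scalar operator, so $A$ is a direct sum of $2\times 2$ blocks and its minimal polynomial has degree at most $2$, not $3=\deg p$ — impossible. If $\beta^{2}+\delta-1=0$, then $\beta\delta=0$: the case $\delta=0$ forces $c=ab$, so $p(x)=(x+a)(x^{2}+b)$; the case $\beta=0$ forces $b=1$ and $c=0$ (using $a^{2}=1$ in $\mathbb{F}_3$), so $p(x)=x(x^{2}+ax+1)$. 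Either way $p$ would be reducible, contradicting the hypothesis. This exhausts all cases.

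I expect the only laborious part to be the last paragraph: one must carry out the blockwise products $D^{2},D^{3},A^{2},A^{3}$ for non-commuting operator blocks on $K$ and then extract the scalar conditions on $a,b,c$. All the rest is short; note that the degenerate situations in which $D$ has fewer than three distinct eigenvalues need no separate treatment, since a scalar $D$ is excluded at once ($M=\mu I-A$ cannot satisfy $M^{2}=0$ because $A-\mu I$ is invertible).
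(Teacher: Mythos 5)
Your proof is correct, but it takes a genuinely different route from the paper's. The paper normalizes $p$ to $x^3-x^2-x-1$ (handling the other five non-zero-trace irreducible cubics via the substitutions $A\mapsto \pm A\pm\Id$, $2A$, etc.), and then, with $D=A+M$, sandwiches the identities $(A+M)^j-(A+M)^{j-2}$ for $j=3,4,5$ between two copies of $M$ to obtain $MAM=MA^2M=0$, after which $\bigl((A+M)^3-(A+M)\bigr)M=0$ collapses to $(A^2+\Id)M=0$ and the invertibility of $A^2+\Id$ forces $M=0$, a contradiction. You instead argue structurally: the lemma $\Ker M\cap\Ker(MA)=0$ (which is correct, and where irreducibility plus $a\neq 0$ enter), combined with $\im M\subseteq\Ker M$, forces $\rank M=n/2$, which already disposes of $n=3k$ with $k$ odd, and for $k$ even yields the splitting $V=K\oplus AK$ with $K=\Ker M=\im M$; the blockwise form of $A$, $M$, $D$ then reduces $D^3=D$ and $p(A)=0$ to $\eta^3=\eta$ together with $2a\,\eta^{2}+(a^{2}+b+1)\eta-(c-ab)I=0$, and your case analysis correctly shows every branch forces either a minimal polynomial of degree at most $2$ or reducibility of $p$ (I verified the block identities, the automatic vanishing of the remaining blocks, and the $\mathbb{F}_3$ arithmetic in the cases $\delta=0$ and $\beta=0$). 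Your approach buys uniformity in $p$ (no normalization to a representative polynomial) and a structural explanation of where the obstruction lives ($\rank M=n/2$, $V=K\oplus AK$), at the cost of a longer computation; the paper's proof is shorter and purely identity-based but needs the reduction to one specific cubic. Two cosmetic slips, neither affecting validity: your opening line says $A=D+M$ but you then set $M=D-A$ (the computations consistently use $D=A+M$, which is exactly what the statement requires), and the non-zero trace is used not only in the first step but also at the end, when you divide by $2a$ and use $a^{2}=1$.
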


\begin{proof}
We first observe that over $\mathbb{F}_3$ there are six irreducible polynomials of degree 3 and non-zero trace. Let us suppose that $A\in\mathbb{M}_n(\mathbb{F})$ is similar to a direct sum of companion matrices all of them associated to the polynomial $x^3-x^2-x-1\in \mathbb{F}[x]$, and let us show that for such $A$ there does {\it not exist} a square-zero matrix $M\in \mathbb{M}_n(\mathbb{F})$ such that $A+M$ is diagonalizable. Once we have shown that fact, it is direct to see that there does not exist such $M$ for the matrices $A+\Id$, $A-\Id$, $2A$, $2A+\Id$, $2A-\Id$, which are similar to direct sums of companion matrices, all of them associated to the other five irreducible polynomials of degree 3 and non-zero trace over $\mathbb{F}_3$.

Suppose that there exists a square-zero matrix $M\in\mathbb{M}_n(\mathbb{F}_3)$ such that $D=A+M$ is diagonalizable. Then, we have that $A^3=A^2+A+{\rm Id}$, $M^2=0$ and $D^3=D$ (because every diagonal matrix over $\mathbb{F}_3$ satisfies the equation $x^3-x=0$).

\medskip

(1) Let us prove that $MA^2M=-MAMAM$: in fact, one calculates that
\begin{align*}
0&=M\left((A+M)^3-(A+M)\right)M=MA^3M+MAMAM-MAM\\&=M(A^2+A+{\rm Id})M+MAMAM-MAM\\&=MA^2M+MAMAM.
\end{align*}

(2) Let us prove that $MAMAMAM=MAM+MAMAM$: in fact, one computes that
\begin{align*}
0&=M\left((A+M)^4-(A+M)^2\right)M\\&=MA^4M+MA^2MAM+MAMA^2M-MA^2M\\&=M({\rm Id}+2A+2A^2)M-2MAMAMAM-MA^2M\\&=2MAM-MAMAM+MAMAMAM
\end{align*}

(3) Let us prove that $MAMAM=-MAM$: in fact, one has that
\begin{align*}
0&=M\left((A+M)^5-(A+M)\right)M\\&=MA^5M+MA^3MAM+MA^2MA^2M\\&+MAMA^3M+MAMAMAM-MAM\\&=M(4A^2+2{\rm Id})M+
  M(A^2+A+{\rm Id})MAM+MA^2MA^2M\\&+MAM(A^2+A+{\rm Id})M+MAM+MAMAM-MAM\\&=-MAMAM
  -MAMAMAM+MAMAM+MAMAMAMAM\\&-MAMAMAM+MAMAM+MAMAM\\&
  =-2MAM+MAMAM+MAMAMAM\\&=-MAM+2MAMAM.
\end{align*}

(4) Let us prove that $MAM=0=MA^2M$: in fact, one finds that
$$MAMAMAM=_{(2)}MAM+MAMAM=_{(3)}MAM-MAM=0,$$ so
$$MAMAM=_{(3)}-MAMAMAM=0$$ and, combining (3) with (1), we get
$$MAM=0=-MA^2M.$$

(5) Let us prove that $M=0$, which is an obvious contradiction, because $A$ is not diagonalizable ($A^3\ne A$): indeed, it must be that
\begin{align*}
0&=\left((A+M)^3-(A+M)\right)M=A^3M+AMAM+MA^2M-AM\\&=(A^2+A+{\rm Id})M-AM=A^2M-M=(A^2+{\rm Id})M.
\end{align*}

Now, since $A^2+{\rm Id}$ is invertible, we conclude that $M=0$, as suspected.
\end{proof}

\begin{remark}
Thanks to Proposition \ref{counterexamplehighorders}, we can answer in the negative the question posed by Breaz in \cite[Remark 7]{Br}, namely for the field $\mathbb{F}_3$ it is {\it not} true that for big enough $n$ every matrix of order $n$ can be decomposed into $D+M$ with $D^3=D$ and $M^2=0$.
\end{remark}

\section{Further works}

In this paper, we have dealt with matrices over fields of odd cardinality. It remains to study the analogous problems for fields of characteristic two. To that end, we know from \cite{BCDM} that every matrix in $\mathbb{M}_n(\mathbb{F}_2)$ is a sum of an idempotent matrix and a nilpotent matrix. Nevertheless, \v{S}ter provided in \cite[answer to Question 4.1]{S} two companion matrices of order 4 over $\mathbb{F}_2$ that cannot be expressed as $D+M$, where $D^2=D$ (equivalently, $D$ is diagonalizable) and $M^2=0$. These counterexamples were extended by Shitov in \cite{Sh}, showing that the direct sum of an odd number of such matrices cannot be expressed as $D+M$.

Thereby, it remains to study matrices over fields of characteristic two and cardinality at least four in the way indicated via the following two parallel queries.

\medskip

\noindent{\bf Questions}. Is it true that each matrix in $\mathbb{M}_n(\mathbb{F}_{2^i})$, $i\ge 2$, is a sum of a diagonalizable matrix and a square-zero matrix? If not, is that decomposition valid at least for non-derogative matrices?

\bigskip

\noindent{\bf Funding:} The first-named author (Peter Danchev) was supported in part by the BIDEB 2221 of T\"UB\'ITAK; the second and third-named authors (Esther Garc\'{\i}a and Miguel G\'omez Lozano) were partially supported by Ayuda Puente 2023, URJC, and MTM2017-84194-P (AEI/FEDER, UE). The all three authors were partially supported by the Junta de Andaluc\'{\i}a FQM264.

\vskip3.0pc

\bibliographystyle{plain}

\begin{thebibliography}{100}

\bibitem{AM}
A.~N. Abyzov and I.~I. Mukhametgaliev.
\newblock On some matrix analogues of the little Fermat theorem.
\newblock {\em Mat. Zametki}, {\bf 101}(2):163--168, 2017.

\bibitem{A}
A.~N. Abyzov.
\newblock Strongly $q$-nil-clean rings.
\newblock {\em Sibirsk. Mat. Zh.}, {\bf 60}(2):257--273, 2019.

\bibitem{Br}
S.~Breaz.
\newblock Matrices over finite fields as sums of periodic and nilpotent elements.
\newblock {\em Linear Algebra \& Appl.}, {\bf 555}: 92--97, 2018.

\bibitem{BCDM}
S.~Breaz, G.~C\v{a}lug\v{a}reanu, P.~Danchev and T.~Micu.
\newblock Nil-clean matrix rings.
\newblock {\em Linear Algebra \& Appl.}, {\bf 439}:3115--3119, 2013.

\bibitem{BM}
S.~Breaz and S.~Megiesan.
\newblock Nonderogatory matrices as sums of idempotent and nilpotent matrices.
\newblock {\em Linear Algebra \& Appl.}, {\bf 605}:239--248, 2020.

\bibitem{DGL1}
P.~Danchev, E.~Garc\'ia and M.~G\'omez Lozano.
\newblock Decompositions of matrices into diagonalizable and square-zero matrices.
\newblock {\em Linear and Multilinear Algebra}, {\bf 70}(19):4056--4070, 2022.

\bibitem{DGL2}
P.~Danchev, E.~Garc\'ia and M.~G. Lozano.
\newblock Decompositions of matrices into potent and square-zero matrices.
\newblock {\em Internat. J. Algebra Comput.}, {\bf 32}(2):251--263, 2022.


\bibitem{DGL3}
P.~Danchev, E.~Garc\'ia and M.~G. Lozano.
\newblock Decompositions of matrices into a sum of torsion matrices and matrices of fixed nilpotence.
\newblock {\em Linear Algebra Appl.}, {\bf 676}:44--55, 2023.

\bibitem{DGL4}
P.~Danchev, E.~Garc\'ia and M.~G. Lozano.
\newblock On prescribed characteristic polynomials.
\newblock {\em Linear Algebra Appl.}, {\bf 702}:1--18, 2024.


\bibitem{Sh}
Y.~Shitov.
\newblock The ring $\mathbb{M}_{8k+4}(\mathbb{Z}_2)$ is nil-clean of index four.
\newblock {\em Indag. Math. (N.S.)}, {\bf 30}:1077-1078, 2019.

\bibitem{S}
J.~\v{S}ter.
\newblock On expressing matrices over $\mathbb{Z}_2$ as the sum of an idempotent and a nilpotent.
\newblock {\em Linear Algebra \& Appl.}, {\bf 544}:339--349, 2018.

\end{thebibliography}

\end{document}